\definecolor{dark-red}{rgb}{0.4,0.15,0.15}
\definecolor{dark-blue}{rgb}{0.15,0.15,0.4}
\definecolor{medium-blue}{rgb}{0,0,0.5}
\newcommand*{\defeq}{\mathrel{\rlap{%
			\raisebox{0.3ex}{$\m@th\cdot$}}%
		\raisebox{-0.3ex}{$\m@th\cdot$}}%
	=}
\newcommand{\bigboxplus}{
	\mathop{
		\vphantom{\bigoplus} 
		\mathchoice
		{\vcenter{\hbox{\resizebox{\widthof{$\displaystyle\bigoplus$}}{!}{$\boxplus$}}}}
		{\vcenter{\hbox{\resizebox{\widthof{$\bigoplus$}}{!}{$\boxplus$}}}}
		{\vcenter{\hbox{\resizebox{\widthof{$\scriptstyle\oplus$}}{!}{$\boxplus$}}}}
		{\vcenter{\hbox{\resizebox{\widthof{$\scriptscriptstyle\oplus$}}{!}{$\boxplus$}}}}
	}\displaylimits 
}
\newcommand{\bigboxtimes}{
	\mathop{
		\vphantom{\bigotimes} 
		\mathchoice
		{\vcenter{\hbox{\resizebox{\widthof{$\displaystyle\bigotimes$}}{!}{$\boxtimes$}}}}
		{\vcenter{\hbox{\resizebox{\widthof{$\bigotimes$}}{!}{$\boxtimes$}}}}
		{\vcenter{\hbox{\resizebox{\widthof{$\scriptstyle\otimes$}}{!}{$\boxtimes$}}}}
		{\vcenter{\hbox{\resizebox{\widthof{$\scriptscriptstyle\otimes$}}{!}{$\boxtimes$}}}}
	}\displaylimits 
}
\newcommand{\Agp}{\mathrm{A}}
\renewcommand{\C}{\mathbb{C}}
\newcommand{\F}{\mathbb{F}}
\newcommand{\GL}{\mathrm{GL}}
\newcommand{\Mgp}{\mathrm{M}}
\newcommand{\Ngp}{\mathrm{N}}
\renewcommand{\O}{\mathrm{O}}
\newcommand{\OO}{\mathcal{O}}
\newcommand{\Pgp}{\mathrm{P}}
\newcommand{\pp}{\mathfrak{p}}
\newcommand{\R}{\mathbb{R}}
\newcommand{\Ss}{\mathscr{S}}
\newcommand{\WW}{\mathcal{W}}
\DeclareMathOperator{\antidiag}{antidiag}
\DeclareMathOperator{\diag}{diag}
\DeclareMathOperator{\Ind}{Ind}
\DeclareMathOperator{\Mat}{Mat}
\DeclareMathOperator*{\vol}{vol}
\numberwithin{equation}{section}
\newtheorem{theorem}[equation]{Theorem}
\newtheorem{lemma}[equation]{Lemma}
\newtheorem*{problem}{Test Vector Problem}
\theoremstyle{definition}
\newtheorem{definition}[equation]{Definition}
\begin{document}

\title[Test Vectors for Nonarchimedean Godement--Jacquet Zeta Integrals]{Test Vectors for Nonarchimedean Godement--Jacquet Zeta Integrals}

\author{Peter Humphries}

\address{Department of Mathematics, University College London, Gower Street, London WC1E 6BT, United Kingdom}

\email{\href{mailto:pclhumphries@gmail.com}{pclhumphries@gmail.com}}

\subjclass[2010]{11F70 (primary); 20G05, 22E50 (secondary)}

\thanks{Research supported by the European Research Council grant agreement 670239.}

\begin{abstract}
Given an induced representation of Langlands type $(\pi,V_{\pi})$ of $\GL_n(F)$ with $F$ nonarchimedean, we show that there exist explicit choices of matrix coefficient $\beta$ and Schwartz--Bruhat function $\Phi$ for which the Godement--Jacquet zeta integral $Z(s,\beta,\Phi)$ attains the $L$-function $L(s,\pi)$.
\end{abstract}

\maketitle

\section{Introduction}

Let $F$ be a nonarchimedean local field with ring of integers $\OO$, maximal ideal $\pp$, and uniformiser $\varpi$, so that $\varpi \OO = \pp$ and $\OO/\pp \cong \F_q$ for some prime power $q$. We normalise the absolute value $|\cdot|$ on $F$ such that $|\varpi| = q^{-1}$.

Let $(\pi,V_{\pi})$ be a generic irreducible admissible smooth representation of $\GL_n(F)$, where $F$ is a nonarchimedean local field. Given a matrix coefficient $\beta(g) = \langle \pi(g) \cdot v_1, \widetilde{v_2}\rangle$ of $\pi$, where $v_1 \in V_{\pi}$ and $v_2 \in V_{\widetilde{\pi}}$, and given a Schwartz--Bruhat function $\Phi \in \Ss(\Mat_{n \times n}(F))$, we define the Godement--Jacquet zeta integral \cite{GJ72,Jac79}
\begin{equation}
\label{GJinteq}
Z(s,\beta,\Phi) \defeq \int_{\GL_n(F)} \beta(g) \Phi(g) \left|\det g\right|^{s + \frac{n - 1}{2}} \, dg,
\end{equation}
which is absolutely convergent for $\Re(s)$ sufficiently large. The test vector problem for Godement--Jacquet zeta integrals is the following.

\begin{problem}
Given a generic irreducible admissible smooth representation $(\pi,V_{\pi})$ of $\GL_n(F)$, determine the existence of $K$-finite vectors $v_1 \in V_{\pi}$, $\widetilde{v_2} \in V_{\widetilde{\pi}}$, and a Schwartz--Bruhat function $\Phi \in \Ss(\Mat_{n \times n}(F))$ such that
\[Z(s,\beta,\Phi) = L(s,\pi).\]
\end{problem}

The archimedean analogue of this problem has been resolved for $F = \C$ by Ishii \cite{Ish19} and for $F = \R$ by Lin \cite{Lin18}\footnote{The author has been unable to verify certain aspects of \cite{Lin18}. In particular, the functions constructed in \cite[(6.5) and (6.7)]{Lin18} are defined only on the maximal compact subgroup $K = \O(n)$ of $\GL_n(\R)$. For these functions to be elements of certain induced representations of $\GL_n(\R)$, they must transform under the action of diagonal matrices $a = \diag(a_1,\ldots,a_n) \in \Agp_n(\R)$ in a specified manner, and this action does not seem to be compatible with the definitions \cite[(6.5) and (6.7)]{Lin18} when $k \in K$ is taken to be a diagonal orthogonal matrix.}. For nonarchimedean $F$, the spherical case is resolved in \cite[Lemma 6.10]{GJ72}: one takes $v_1$ and $v_2$ to be spherical vectors and
\[\Phi(x) = \begin{dcases*}
1 & if $x \in \Mat_{n \times n}(\OO)$,	\\
0 & otherwise.
\end{dcases*}\]
We solve the ramified case of this problem.

\begin{theorem}
\label{mainthm}
Let $(\pi,V_{\pi})$ be a generic irreducible admissible smooth representation of $\GL_n(F)$ of conductor exponent $c(\pi) > 0$. Let $\beta(g)$ denote the matrix coefficient $\langle \pi(g) \cdot v^{\circ}, \widetilde{v^{\circ}}\rangle$, where $v^{\circ} \in V_{\pi}$ is the newform of $\pi$ normalised such that $\beta(1_n) = 1$. Define the Schwartz--Bruhat function $\Phi \in \Ss(\Mat_{n \times n}(F))$ by
\begin{equation}
\label{Phidefeq}
\Phi(x) \defeq \begin{dcases*}
\frac{\omega_{\pi}^{-1}\left(x_{n,n}\right)}{\vol(K_0(\pp^{c(\pi)}))} & if $x \in \Mat_{n \times n}(\OO)$ with $x_{n,1}, \ldots, x_{n,n - 1} \in \pp^{c(\pi)}$ and $x_{n,n} \in \OO^{\times}$,	\\
0 & otherwise,
\end{dcases*}
\end{equation}
where $\omega_{\pi}$ denotes the central character of $\pi$ and the congruence subgroup $K_0(\pp^{c(\pi)})$ is as in \eqref{K0eq}. Then for $\Re(s)$ sufficiently large,
\[Z(s,\beta,\Phi) = L(s,\pi).\]
\end{theorem}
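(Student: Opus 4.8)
The plan is to evaluate $Z(s,\beta,\Phi)$ by collapsing the $\GL_n(F)$-integral onto a Godement--Jacquet-type integral for $\GL_{n-1}(F)$ using the precise shape of $\Phi$ together with the defining properties of the newform, and then evaluating that integral by means of the essential Whittaker function and the Jacquet--Piatetski-Shapiro--Shalika formula for $\GL_n \times \GL_1$.

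First I would identify the support of $\Phi$ inside $\GL_n(F)$. Writing $c \defeq c(\pi)$, the function $\Phi$ is supported on the set $\Omega$ of $g \in \Mat_{n \times n}(\OO) \cap \GL_n(F)$ whose bottom row lies in $\pp^{c} \times \cdots \times \pp^{c} \times \OO^\times$. Clearing the bottom row by right multiplication by the lower unipotent matrix with $(n,j)$-entry $-g_{n,j}/g_{n,n} \in \pp^{c}$ (which lies in $K_1(\pp^{c})$) yields the decomposition $\Omega = \Pgp_n^{+} \cdot D \cdot K_1(\pp^{c})$, where $\Pgp_n^{+}$ consists of the matrices $\left(\begin{smallmatrix} A & b \\ 0 & 1 \end{smallmatrix}\right)$ with $A \in \Mat_{(n-1)\times(n-1)}(\OO) \cap \GL_{n-1}(F)$ and $b \in \OO^{n-1}$, and $D = \{\diag(1_{n-1},d) : d \in \OO^\times\}$. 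On $\Omega$ one has $\beta(pdk) = \omega_\pi(d)\beta(p)$ --- since $v^{\circ}$ and $\widetilde{v^{\circ}}$ are fixed by $K_1(\pp^{c})$ and $K_0(\pp^{c})$ acts on the newform line through $\omega_\pi$ of the $(n,n)$-entry --- while $\Phi(pdk) = \omega_\pi^{-1}(d)/\vol(K_0(\pp^{c}))$ and $\lvert \det(pdk)\rvert = \lvert \det A \rvert$; the decisive point is that the factors $\omega_\pi(d)$ and $\omega_\pi^{-1}(d)$ cancel, so that the integrand is constant along $D \cdot K_1(\pp^{c})$-cosets. Performing the (elementary, but measure-theoretically delicate) bookkeeping for this decomposition --- tracking the modular character of the mirabolic $\Pgp_n(F)$ --- and using that $\left(\begin{smallmatrix} 1_{n-1} & b \\ 0 & 1 \end{smallmatrix}\right)$ lies in $K_1(\pp^{c})$ so that the $b$-integration is trivial, I expect to arrive at
\[ Z(s,\beta,\Phi) = \int_{\Mat_{(n-1)\times(n-1)}(\OO) \cap \GL_{n-1}(F)} \beta\!\left(\begin{smallmatrix} A & 0 \\ 0 & 1 \end{smallmatrix}\right) \lvert \det A \rvert^{s - \frac{n-1}{2}} \, d^{\times}\!A, \]
with $d^{\times}\!A$ the $\GL_{n-1}(F)$-invariant measure normalised so that $\vol(\GL_{n-1}(\OO)) = 1$. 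The normalising factor $\vol(K_0(\pp^{c}))^{-1}$ built into $\Phi$ is precisely what forces the overall constant to be $1$; for $n = 2$ this recovers the classical reduction of the $\GL_2$ Godement--Jacquet integral of the newform to its Rankin--Selberg integral.

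Next I would evaluate this $\GL_{n-1}(F)$-integral, which is the Godement--Jacquet integral attached to the (reducible) restriction $\pi\vert_{\GL_{n-1}(F)}$, the spherical Schwartz--Bruhat function, and the matrix coefficient coming from the newform. The function $A \mapsto \beta(\diag(A,1))$ is bi-$\GL_{n-1}(\OO)$-invariant, hence depends only on the elementary divisors of $A$; realising $\beta$ through the mirabolic pairing $\langle v_1, \widetilde{v_2}\rangle = c_\pi \int_{\Ngp_n(F) \backslash \Pgp_n(F)} W_{v_1}(p) W_{\widetilde{v_2}}(p)\, dp$ with $v^{\circ} \leftrightarrow W^{\circ}$ the essential Whittaker function, and using the identification $\Ngp_n(F) \backslash \Pgp_n(F) \cong \Ngp_{n-1}(F) \backslash \GL_{n-1}(F)$, I would rewrite the integral in terms of $W^{\circ}$ and $\widetilde{W^{\circ}}$ restricted to $\GL_{n-1}(F)$. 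Here I would invoke the known support and vanishing properties of the essential Whittaker function (it is supported, modulo $\Ngp_n(F)$ on the left and $K_0(\pp^{c})$ on the right, on torus elements in the dominant integral cone), which should make the $\Ngp_{n-1}(F)\backslash\GL_{n-1}(F)$-integration degenerate in exactly the way needed; the surviving contribution is the $\GL_n \times \GL_1$ Rankin--Selberg integral of the essential vector, which by Jacquet--Piatetski-Shapiro--Shalika equals
\[ \int_{F^\times} W^{\circ}\!\left(\diag(a, 1_{n-1})\right) \lvert a \rvert^{s - \frac{n-1}{2}} \, d^\times a = L(s,\pi). \]

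The main obstacle, and where the bulk of the work lies, is this last step: showing that the double integral over $\GL_{n-1}(F)$ produced by the mirabolic pairing actually collapses onto the $\GL_n \times \GL_1$ integral, i.e.\ that all contributions away from the ``diagonal'' direction cancel. For $n = 2$ this is transparent --- one checks the exact identity $\beta(\diag(\varpi^{m},1)) = W^{\circ}(\diag(\varpi^{m},1))$, after which the Mellin transform is $L(s,\pi)$ --- but for general $n$ it requires careful use of the support properties of $W^{\circ}$ and of the lattice-counting volumes $\vol\bigl(\GL_{n-1}(\OO)\,\diag(\varpi^{a_1},\dots,\varpi^{a_{n-1}})\,\GL_{n-1}(\OO)\bigr)$. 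A secondary obstacle is the Haar-measure bookkeeping in the first step, where one must verify that the modular character of $\Pgp_n(F)$ combines with $\vol(K_0(\pp^{c}))^{-1}$ to yield the constant $1$. An alternative to the Whittaker-function evaluation would be to argue via the Godement--Jacquet functional equation: the ratio $Z(s,\beta,\Phi)/L(s,\pi)$ is a priori a polynomial in $q^{\pm s}$, and comparing its degree with that of the corresponding ratio for the pair $(\widetilde{\beta}, \widehat{\Phi})$ --- for which one must compute the Fourier transform $\widehat{\Phi}$ and the contragredient matrix coefficient $\widetilde{\beta}$ --- forces the ratio to be a constant, which one then pins down by evaluating the leading coefficient directly; but the route above is more transparent.
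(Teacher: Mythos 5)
Your approach is genuinely different from the paper's, and it contains two unresolved gaps that you partially acknowledge but that I think are more serious than you indicate.

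The paper never touches the Godement--Jacquet integral directly. Instead it runs the argument ``from the outside in'': it takes an \emph{arbitrary} spherical $\pi'$ of $\GL_n(F)$, inserts the propagation formula of Lemma \ref{propagationformulalemma} into Kim's $\GL_n \times \GL_n$ Rankin--Selberg integral $\Psi(s,W^{\circ},W^{\prime\circ},\Phi^{\circ})$, and after folding and applying the projection identity \eqref{Piprojeq} obtains the factorisation $\Psi(s,W^{\circ},W^{\prime\circ},\Phi^{\circ}) = Z(s+t_1',\beta,\Phi)\,\Psi(s,W^{\circ},W_0^{\prime\circ})$. Both Rankin--Selberg integrals are known to equal the corresponding $L$-functions (Theorems \ref{JPPSWhittakerthm} and \ref{Kimthm}), and the multiplicativity $L(s,\pi\times\pi') = L(s+t_1',\pi)\,L(s,\pi\times\pi_0')$ from \cite{JP-SS83} then gives $Z(s+t_1',\beta,\Phi)=L(s+t_1',\pi)$. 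The entire argument never requires any explicit knowledge of the values of $\beta$, nor any model for the pairing $\langle\,\cdot\,,\widetilde{v^{\circ}}\rangle$.

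Your proposal works ``from the inside out'' and runs into two obstacles. First, the reduction of $Z(s,\beta,\Phi)$ to an integral over $\GL_{n-1}(F)$: the decomposition $\Omega = \Pgp_n^+ \cdot D \cdot K_1(\pp^{c})$ is far from a bijective parametrisation, so the Jacobian and the resulting constant are genuinely nontrivial; as written this is left as ``I expect to arrive at''. Moreover the exponent you write down does not look right: using the open-cell decomposition $g = \left(\begin{smallmatrix} 1 & b \\ 0 & 1\end{smallmatrix}\right)\diag(A,t)\left(\begin{smallmatrix} 1 & 0 \\ r & 1\end{smallmatrix}\right)$ and $dg = \delta_{\Pgp_{(n-1,1)}}^{-1}(\diag(A,t))\,db\,d^\times A\,d^\times t\,dr$ with $\delta_{\Pgp_{(n-1,1)}}(\diag(A,t)) = |\det A|\,|t|^{-(n-1)}$ and $|t|=1$, the shift on the exponent is by $-1$, giving $|\det A|^{s + (n-3)/2}$, which agrees with your $s - (n-1)/2$ only for $n = 2$. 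Second, and more fundamentally, the step you identify yourself as ``where the bulk of the work lies'' --- expressing $\beta(\diag(A,1))$ through a mirabolic pairing $\int_{\Ngp_n(F)\backslash\Pgp_n(F)} W_{v_1}(p) W_{\widetilde{v_2}}(p)\,dp$ and showing the resulting double integral over $\GL_{n-1}(F)$ collapses to the $\GL_n\times\GL_1$ Rankin--Selberg integral of $W^{\circ}$ --- is not carried out, and it is not clear that it can be. For an induced representation of Langlands type that is not unitary the mirabolic pairing requires regularisation, and even granting it, the function $A \mapsto \beta(\diag(A,1))$ is a complicated bi-$\GL_{n-1}(\OO)$-invariant function on the Cartan cone whose Mellin transform has no a priori reason to be $L(s,\pi)$; the claimed collapse is exactly the content of the theorem, not an intermediate step. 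The paper's route through the $\GL_n\times\GL_n$ integral, the propagation formula, and the $L$-function factorisation is precisely what sidesteps this computation.
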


\section{Induced Representations of Langlands Type}

Rather than working with generic irreducible admissible smooth representations, we will work in the more general setting of induced representations of Langlands type; see \cite[Section 1.5]{CP-S17} for further details.

Given representations $\pi_1, \ldots, \pi_r$ of $\GL_{n_1}(F), \ldots, \GL_{n_r}(F)$, where $n_1 + \cdots + n_r = n$, we form the representation $\pi_1 \boxtimes \cdots \boxtimes \pi_r$ of $\Mgp_{\Pgp}(F)$, where $\boxtimes$ denotes the outer tensor product and $\Mgp_{\Pgp}(F)$ denote the block-diagonal Levi subgroup of the standard parabolic subgroup $\Pgp(F) = \Pgp_{(n_1,\ldots,n_r)}(F)$ of $\GL_n(F)$. We then extend this representation trivially to a representation of $\Pgp(F)$. By normalised parabolic induction, we obtain an induced representation $\pi$ of $\GL_n(F)$,
\[\pi = \bigboxplus_{j = 1}^{r} \pi_j \defeq \Ind_{\Pgp(F)}^{\GL_n(F)} \bigboxtimes_{j = 1}^{r} \pi_j.\]

When $\pi_1,\ldots,\pi_r$ are irreducible and essentially square-integrable, $\pi_1 \boxplus \cdots \boxplus \pi_r$ is said to be an induced representation of Whittaker type; such a representation is admissible and smooth. Moreover, if each $\pi_j$ is of the form $\sigma_j \left|\det\right|^{t_j}$, where $\sigma_j$ is irreducible, unitary, and square-integrable, and $\Re(t_1) \geq \cdots \geq \Re(t_r)$, then $\pi$ is said to be an induced representation of Langlands type. Every irreducible admissible smooth representation $\pi$ of $\GL_n(F)$ is isomorphic to the unique irreducible quotient of some induced representation of Langlands type. If $\pi$ is also generic, then it is isomorphic to some (necessarily irreducible) induced representation of Langlands type.

An induced representation of Langlands type $(\pi,V_{\pi})$ is isomorphic to its Whittaker model $\WW(\pi,\psi)$, the image of $V_{\pi}$ under the map $v \mapsto \Lambda(\pi(\cdot) \cdot v)$, where $\Lambda : V_{\pi} \to \C$ is the unique (up to scalar multiplication) nontrivial Whittaker functional associated to an additive character $\psi$ of $F$. This is a continuous linear functional that satisfies
\[\Lambda\left(\pi(u) \cdot v\right) = \psi_n(u) \Lambda(v)\]
for all $v \in V_{\pi}$ and $u \in \Ngp_n(F)$, where $\Ngp_n(F)$ denotes the unipotent radical of the standard minimal parabolic subgroup and $\psi_n(u) \defeq \psi(u_{1,2} + u_{2,3} + \cdots + u_{n - 1,n})$.

An induced representation of Langlands type $\pi$ is said to be spherical if it has a $K$-fixed vector, where $K \defeq \GL_n(\OO)$. Such a spherical representation $\pi$ must be a principal series representation of the form $|\cdot|^{t_1} \boxplus \cdots \boxplus |\cdot|^{t_n}$; furthermore, the subspace of $K$-fixed vectors must be one-dimensional. This $K$-fixed vector, unique up to scalar multiplication, is called the spherical vector of $\pi$. In the induced model of $\pi$, the normalised spherical vector is the unique smooth right $K$-invariant function $f^{\circ} : \GL_n(F) \to \C$ satisfying
\[f^{\circ}(uag) = f^{\circ}(g) \delta_n^{1/2}(a) \prod_{i = 1}^{n} |a_i|^{t_i}\]
for all $u \in \Ngp_n(F)$, $a = \diag(a_1,\ldots,a_n) \in \Agp_n(F) \cong F^n$, the subgroup of diagonal matrices, and $g \in \GL_n(F)$, where $\delta_n(a) \defeq \prod_{i = 1}^{n} |a_i|^{n - 2i + 1}$ denotes the modulus character of the standard minimal parabolic subgroup, and normalised such that
\[f^{\circ}(1_n) = \prod_{i = 1}^{n - 1} \prod_{j = i + 1}^{n} \zeta_F(1 + t_i - t_j), \qquad \zeta_F(s) \defeq \frac{1}{1 - q^{-s}}.\]
The normalised spherical Whittaker function $W^{\circ}$ in the Whittaker model $\WW(\pi,\psi)$ is given by the analytic continuation of the Jacquet integral
\[W^{\circ}(g) \defeq \int_{\Ngp_n(F)} f^{\circ}(w_n u g) \overline{\psi_n}(u) \, du,\]
where $w_n = \antidiag(1,\ldots,1)$ is the long Weyl element. The Jacquet integral is absolutely convergent if $\Re(t_1) > \cdots > \Re(t_n)$ \cite[Section 3]{JS83} and extends holomorphically as a function of the complex variables $t_1,\ldots,t_n$ \cite{CS80}. The Haar measure on $\Ngp_n(F)$ is $du = \prod_{j = 1}^{n - 1} \prod_{\ell = j + 1}^{n} du_{j,\ell}$, where for $u_{j,\ell} \in F$, $du_{j,\ell}$ is the additive Haar measure on $F$ normalised to give $\OO$ volume $1$. With this normalisation of Haar measures and with $\psi$ an unramified additive character of $F$, the normalised spherical vector $W^{\circ} \in \WW(\pi,\psi)$ satisfies $W^{\circ}(1_n) = 1$.

\section{The Newform}

For each nonnegative integer $m$, we define the congruence subgroup $K_0(\pp^m)$ of $K$ by
\begin{equation}
\label{K0eq}
K_0\left(\pp^m\right) \defeq \left\{k \in K : k_{n,1},\ldots,k_{n,n - 1} \in \pp^m\right\}.
\end{equation}

\begin{theorem}[{\cite[Th\'{e}or\`{e}me (5)]{JP-SS81}}]
Let $(\pi,V_{\pi})$ be an induced representation of Langlands type of $\GL_n(F)$. Then either $\pi$ is spherical, so that
\[V_{\pi}^K \defeq \left\{v \in V_{\pi} : \pi(k) \cdot v = v \text{ for all } k \in K\right\}\]
is one-dimensional, or $\pi$ is ramified, in which case $V_{\pi}^K$ is empty and there exists a minimal positive integer $m = c(\pi)$ for which the vector subspace
\[V_{\pi}^{K_0(\pp^m)} \defeq \left\{v \in V_{\pi} : \pi(k) \cdot v = \omega_{\pi}(k_{n,n}) v \text{ for all } k \in K_0(\pp^m)\right\}\]
is nontrivial; moreover, $V_{\pi}^{K_0(\pp^{c(\pi)})}$ is one-dimensional.
\end{theorem}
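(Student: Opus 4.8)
The plan is to work throughout in the Whittaker model $\WW(\pi,\psi)$, identified with $V_{\pi}$ as above, and to treat the non-spherical case, the spherical dichotomy being already supplied by the description of spherical vectors in the induced model; thus $V_{\pi}^{K} = 0$, and for $m \ge 1$ the quantity $\omega_{\pi}(k_{n,n})$ is defined for every $k \in K_0(\pp^m)$ since then $k_{n,n} \in \OO^{\times}$. Two facts will be used repeatedly. First, for every $m$ and every $W \in V_{\pi}^{K_0(\pp^m),\omega_{\pi}}$, the function $g \mapsto W(\diag(g,1))$ on $\GL_{n-1}(F)$ is right $\GL_{n-1}(\OO)$-invariant, because $\diag(A,1) \in K_0(\pp^m)$ has $(n,n)$-entry $1$ for every $A \in \GL_{n-1}(\OO)$. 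Second, the Kirillov model is faithful: the map sending $W \in \WW(\pi,\psi)$ to $g \mapsto W(\diag(g,1))$ is injective. So both the existence and the uniqueness assertions can be transported to the $\GL_{n-1}(F)$-level.

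For existence, recall that $\WW(\pi,\psi)$ restricted to the mirabolic subgroup $\Pgp_n(F)$ contains $\Ind_{\Ngp_n(F)}^{\Pgp_n(F)} \psi_n$, realised as the smooth functions on $\Pgp_n(F)$ that are $(\Ngp_n(F),\psi_n)$-equivariant on the left and compactly supported modulo $\Ngp_n(F)$. I would take $W_0$ in this subspace supported on $\Ngp_n(F)\cdot U$ and equal to $\psi_n$ there, with $U \ni 1_n$ a small compact open on which $\psi_n$ is trivial, and then choose $m$ large --- so that $m \ge c(\omega_{\pi}|_{\OO^{\times}})$, that $\Pgp_n(F) \cap K_0(\pp^m)$ fixes $W_0$, and that $\omega_{\pi}$ is trivial on the $(n,n)$-entries occurring in $K_0(\pp^m) \cap \Ngp_n(F)\cdot U$. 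Then $W \defeq \int_{K_0(\pp^m)} \omega_{\pi}(k_{n,n})^{-1}\, \pi(k)\cdot W_0\, dk$ lies in $V_{\pi}^{K_0(\pp^m),\omega_{\pi}}$ --- the transformation rule follows from $((k')^{-1}k)_{n,n} \equiv (k'_{n,n})^{-1} k_{n,n} \bmod \pp^m$ for $k',k \in K_0(\pp^m)$ --- and $W(1_n) = \int_{K_0(\pp^m)} \omega_{\pi}(k_{n,n})^{-1} W_0(k)\, dk \ne 0$ once $U$ is small enough. Hence $V_{\pi}^{K_0(\pp^m),\omega_{\pi}} \ne 0$ for all large $m$, and a minimal positive such $m$, namely $c(\pi)$, exists.

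The one-dimensionality at level $c(\pi)$ is the heart of the matter, and the degree bound below is where I expect the real difficulty. I would introduce the $\GL_n \times \GL_{n-1}$ Rankin--Selberg local integrals: for $W \in \WW(\pi,\psi)$ and $W'$ in the Whittaker model (relative to $\overline{\psi}$) of an irreducible generic \emph{unramified} representation $\pi'$ of $\GL_{n-1}(F)$,
\[\Psi(s,W,W') \defeq \int_{\Ngp_{n-1}(F)\backslash \GL_{n-1}(F)} W\begin{pmatrix} g & \\ & 1 \end{pmatrix} W'(g) \left|\det g\right|^{s - \frac{1}{2}}\, dg,\]
convergent for $\Re(s)$ large, meromorphically continued, satisfying a local functional equation with $\gamma$-factor $\gamma(s,\pi\times\pi',\psi)$ relating it to the integral built from the contragredient Whittaker functions, and generating the fractional ideal $L(s,\pi\times\pi')\, \C[q^{-s},q^{s}]$ as $W,W'$ vary. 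The claim I would establish is that for every $W \in V_{\pi}^{K_0(\pp^{c(\pi)}),\omega_{\pi}}$ and every unramified $\pi'$ with normalised spherical Whittaker function $W'^{\circ}$, one has $\Psi(s,W,W'^{\circ}) = c_W\, L(s,\pi\times\pi')$ with $c_W \in \C$ independent of $\pi'$. The right $\GL_{n-1}(\OO)$-invariance of $g \mapsto W(\diag(g,1))$ and of $W'^{\circ}$ collapses $\Psi(s,W,W'^{\circ})$ to a sum over $\Agp_{n-1}(F)/\Agp_{n-1}(\OO)$ --- a Laurent polynomial in the Satake parameters of $\pi'$ and in $q^{\pm s}$ --- which by the Casselman--Shalika formula \cite{CS80} equals $L(s,\pi\times\pi')$ times a polynomial cofactor; the content is that minimality of $c(\pi)$ forces that cofactor to be constant. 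This is a degree bound to be extracted from the functional equation, by comparing the $K_0(\pp^{\bullet})$-invariance level of $W$ with that of its contragredient partner and observing that a nonconstant cofactor on one side would, via the dual integral, produce a nonzero invariant vector strictly below level $c(\pi)$, contradicting minimality. This is exactly the point at which \cite{JP-SS81} carry out the main work --- an induction on $n$ interwoven with the $\GL_n \times \GL_n$ functional equation and the behaviour of $\varepsilon$-factors --- and an alternative is to run the induction through the Bernstein--Zelevinsky filtration of $\WW(\pi,\psi)|_{\Pgp_n(F)}$ by derivatives.

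Granting the claim, the conclusion is formal. If $W_1, W_2 \in V_{\pi}^{K_0(\pp^{c(\pi)}),\omega_{\pi}}$ are nonzero, then $c_{W_1} W_2 - c_{W_2} W_1$ has $\Psi(s,\cdot,W'^{\circ}) \equiv 0$ for every unramified $\pi'$. Since the spherical Whittaker functions of the unramified representations of $\GL_{n-1}(F)$ span --- through the Schur-polynomial expansion of \cite{CS80} --- enough of the functions on $\Agp_{n-1}(F)/\Agp_{n-1}(\OO)$ to detect the values of any $\GL_n$-Whittaker function on the dominant cone, and a Whittaker function vanishing on the dominant cone vanishes identically, it follows that $g \mapsto (c_{W_1} W_2 - c_{W_2} W_1)(\diag(g,1))$ is identically zero, whence $c_{W_1} W_2 = c_{W_2} W_1$ by faithfulness of the Kirillov model; the same separation forces $c_W \ne 0$ for $W \ne 0$. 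Therefore $V_{\pi}^{K_0(\pp^{c(\pi)}),\omega_{\pi}}$ is one-dimensional, which together with the spherical case completes the proof.
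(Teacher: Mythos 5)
The paper does not prove this theorem at all: it is imported wholesale from \cite[Th\'eor\`eme (5)]{JP-SS81} and used as a black box. So there is no internal proof to compare against, and your proposal can only be judged on its own terms.

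On its own terms, the existence half is essentially correct. Averaging a Kirillov vector $W_0$ supported near the identity over $K_0(\pp^m)$ against $\omega_\pi(k_{n,n})^{-1}$, for $m$ large enough that $m \geq c(\omega_\pi)$ and that the support of $W_0$ is compatible, does produce a nonzero vector in $V_\pi^{K_0(\pp^m)}$; the congruence $\bigl((k')^{-1}k\bigr)_{n,n} \equiv (k'_{n,n})^{-1}k_{n,n} \pmod{\pp^m}$ is the right observation for the transformation law, and $W(1_n) \neq 0$ once $U$ is small. This matches the structure of the existence part of \cite{JP-SS81}. The framework you set up for uniqueness — right $\GL_{n-1}(\OO)$-invariance of the Kirillov restriction, support on the dominant cone, the $\GL_n \times \GL_{n-1}$ Rankin--Selberg integral, Casselman--Shalika, and completeness of Schur polynomials to separate — is also the correct one, and the final formal step (if $c_{W_1} W_2 - c_{W_2} W_1$ pairs trivially against all unramified $W'^\circ$ then its Kirillov restriction vanishes and hence it vanishes) is sound, modulo citing faithfulness of the Kirillov map for Langlands-type (not just irreducible) representations.

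The genuine gap is the claim you yourself flag: that $\Psi(s,W,W'^\circ) = c_W \, L(s,\pi\times\pi')$ with $c_W$ a constant, for \emph{every} $W$ at level $c(\pi)$. This is not a lemma you can invoke; it \emph{is} the theorem, and is precisely the content of the inductive argument in \cite{JP-SS81} that you defer (``this is exactly the point at which [JP-SS81] carry out the main work''). Worse, as stated there is a circularity risk: the test-vector identity is usually formulated for the newform after one-dimensionality is known, and establishing the degree bound directly for an arbitrary level-$c(\pi)$ vector without that input requires the full machinery — tracking how $K_0(\pp^m)$-equivariance transforms under $g \mapsto w_n \, {}^{t}g^{-1}$ (which does not preserve $K_0(\pp^m)$), an induction on $n$, and a careful comparison of $\varepsilon$-factors. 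None of this is supplied. So what you have is an accurate high-level roadmap of the \cite{JP-SS81} proof, not a proof: the decisive step is named but not carried out.
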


\begin{definition}
The vector $v^{\circ} \in V_{\pi}^{K_0(\pp^{c(\pi)})}$, unique up to scalar multiplication, is called the newform of $\pi$. The nonnegative integer $c(\pi)$ is called the conductor exponent of $\pi$, where we set $c(\pi) = 0$ if $\pi$ is spherical.
\end{definition}

For each $m$, we may view $V_{\pi}^{K_0(\pp^m)}$ as the image of the projection map $\Pi^m : V_{\pi} \to V_{\pi}$ given by
\begin{align}
\label{Pimdefeq}
\Pi^m(v) & \defeq \int_K \xi^m(k) \pi(k) \cdot v \, dk,	\\
\label{ximdefeq}
\xi^m(k) & \defeq \begin{dcases*}
\frac{\omega_{\pi}^{-1}(k_{n,n})}{\vol(K_0(\pp^m))} & if $m > 0$ and $k \in K_0(\pp^m)$,	\\
1 & if $m = 0$ and $k \in K$,	\\
0 & otherwise.
\end{dcases*}
\end{align}
Here $dk$ is the Haar measure on the compact group $K$ normalised to give $K$ volume $1$. In particular, for any $v \in V_{\pi}$, we have that
\begin{equation}
\label{Piprojeq}
\Pi^{c(\pi)}(v) = \left\langle v, \widetilde{v^{\circ}}\right\rangle v^{\circ},
\end{equation}
where $v^{\circ} \in V_{\pi}^{K_0(\pp^{c(\pi)})}$ and $\widetilde{v^{\circ}} \in V_{\widetilde{\pi}}^{K_0(\pp^{c(\pi)})}$ are normalised such that $\langle v^{\circ}, \widetilde{v^{\circ}} \rangle = 1$.

We write $W^{\circ}$ for the newform in the Whittaker model $\WW(\pi,\psi)$ normalised such that $W^{\circ}(1_n) = 1$, where $\psi$ is an unramified additive character; we also normalise $v^{\circ} \in V_{\pi}$ and the Whittaker functional $\Lambda$ such that $\Lambda(v^{\circ}) = W^{\circ}(1_n) = 1$. Note that if $\pi$ is spherical, then the newform in the Whittaker model is precisely the normalised spherical Whittaker function.

A key property of $W^{\circ}$ is the fact that it is a test vector for certain Rankin--Selberg integrals.

\begin{theorem}[{Jacquet--Piatetski-Shapiro--Shalika \cite[Th\'{e}or\`{e}me (4)]{JP-SS81}, Jacquet \cite{Jac12}, Matringe \cite[Corollary 3.3]{Mat13}}]
\label{JPPSWhittakerthm}
Let $\pi$ be an induced representation of Langlands type, and let $W^{\circ} \in \WW(\pi,\psi)$ denote the newform in the Whittaker model. Then for any spherical representation of Langlands type $\pi'$ of $\GL_{n - 1}(F)$ with normalised spherical Whittaker function $W^{\prime\circ} \in \WW\left(\pi',\overline{\psi}\right)$, the $\GL_n \times \GL_{n - 1}$ Rankin--Selberg integral
\begin{equation}
\label{GLnxGLn-1eq}
\Psi\left(s,W^{\circ},W^{\prime\circ}\right) \defeq \int\limits_{\Ngp_{n - 1}(F) \backslash \GL_{n - 1}(F)} W^{\circ} \begin{pmatrix} g & 0 \\ 0 & 1 \end{pmatrix} W^{\prime\circ}(g) \left|\det g\right|^{s - \frac{1}{2}} \, dg
\end{equation}
is equal to the Rankin--Selberg $L$-function $L(s,\pi \times \pi')$.
\end{theorem}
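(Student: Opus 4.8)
The plan is to make both Whittaker functions occurring in \eqref{GLnxGLn-1eq} completely explicit on the diagonal torus of $\GL_{n-1}(F)$, and then to recognise the resulting sum over partitions as a Cauchy product of Schur polynomials. The first step is to unfold the integral. The crucial observation is that $\begin{pmatrix} h & 0 \\ 0 & 1 \end{pmatrix}$ lies in $K_0(\pp^{c(\pi)})$ with lower-right entry equal to $1$ for every $h \in \GL_{n-1}(\OO)$, so the defining transformation property of the newform forces $g \mapsto W^{\circ}\begin{pmatrix} g & 0 \\ 0 & 1 \end{pmatrix}$ to be right $\GL_{n-1}(\OO)$-invariant, while $W^{\prime\circ}$ is spherical by hypothesis. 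Combining this with the Iwasawa decomposition $\GL_{n-1}(F) = \Ngp_{n-1}(F)\Agp_{n-1}(F)\GL_{n-1}(\OO)$, with the fact that $\psi_n$ restricts to $\psi_{n-1}$ on the embedded copy of $\Ngp_{n-1}(F)$ so that the two Whittaker characters cancel on the integrand, and with right $\Agp_{n-1}(\OO)$-invariance of both factors, the integral \eqref{GLnxGLn-1eq} collapses, for $\Re(s)$ large, to a single volume constant times the sum over $a = \diag(\varpi^{k_1},\ldots,\varpi^{k_{n-1}})$ with $(k_1,\ldots,k_{n-1}) \in \Z^{n-1}$ of $W^{\circ}\begin{pmatrix} a & 0 \\ 0 & 1 \end{pmatrix} W^{\prime\circ}(a)\, \delta_{n-1}^{-1}(a)\, |\det a|^{s-1/2}$.

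Next I would substitute the two explicit torus formulas. For the spherical function this is the Casselman--Shalika formula: writing $\pi' = |\cdot|^{t'_1} \boxplus \cdots \boxplus |\cdot|^{t'_{n-1}}$ and $\alpha'_j = q^{-t'_j}$, one has $W^{\prime\circ}(a) = \delta_{n-1}^{1/2}(a)\, s_{(k_1,\ldots,k_{n-1})}(\alpha'_1,\ldots,\alpha'_{n-1})$ when $k_1 \geq \cdots \geq k_{n-1}$ and zero otherwise, where $s_\lambda$ is the Schur polynomial of $\lambda$. For the newform one needs the Shintani-type formula for the essential Whittaker function: writing $\alpha_1,\ldots,\alpha_n$ for the inverse roots of $L(s,\pi) = \prod_{i=1}^{n}(1 - \alpha_i q^{-s})^{-1}$ (where the remaining parameters, if any, are set to $0$), one has $W^{\circ}\begin{pmatrix} a & 0 \\ 0 & 1 \end{pmatrix} = \delta_n^{1/2}\begin{pmatrix} a & 0 \\ 0 & 1 \end{pmatrix} s_{(k_1,\ldots,k_{n-1})}(\alpha_1,\ldots,\alpha_n)$ when $k_1 \geq \cdots \geq k_{n-1} \geq 0$ and zero otherwise. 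When $\pi$ is itself spherical this is just the $\GL_n$ Casselman--Shalika formula (the extra constraint $k_{n-1} \geq 0$ is the dominance condition $k_{n-1} \geq k_n = 0$, and appending a zero to a partition does not change its Schur polynomial), so in that case the argument is entirely elementary; the genuinely new content is that for ramified $\pi$ the newform Whittaker function remains supported on integral $a$ and continues to obey the Schur-polynomial formula in the parameters $\alpha_i$. Granting both formulas, a short manipulation of the modulus characters shows that $\delta_n^{1/2}\begin{pmatrix} a & 0 \\ 0 & 1 \end{pmatrix} \delta_{n-1}^{1/2}(a)\, \delta_{n-1}^{-1}(a)\, |\det a|^{s-1/2} = (q^{-s})^{k_1 + \cdots + k_{n-1}}$, so the sum becomes $\sum_{\lambda} s_\lambda(\alpha_1,\ldots,\alpha_n)\, s_\lambda(\alpha'_1,\ldots,\alpha'_{n-1})\, (q^{-s})^{|\lambda|}$, the common support condition restricting $\lambda$ to partitions with at most $n-1$ parts. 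The Cauchy identity $\sum_\lambda s_\lambda(x)s_\lambda(y) = \prod_{i,j}(1 - x_i y_j)^{-1}$, applied with $x_i = \alpha_i$ and $y_j = q^{-s}\alpha'_j$, then identifies this with $\prod_{i=1}^{n}\prod_{j=1}^{n-1}(1 - \alpha_i\alpha'_j q^{-s})^{-1} = L(s,\pi \times \pi')$; comparing the $\lambda = \emptyset$ terms as $\Re(s) \to \infty$ forces the volume constant to be $1$, and since both sides are rational functions of $q^{-s}$ the identity extends from $\Re(s)$ large to all $s$.

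The one genuinely hard ingredient is the Shintani-type formula for the newform Whittaker function on the $\GL_{n-1}$-torus when $\pi$ is ramified --- in particular the claim that it is then supported on integral $a$ and still equals the Schur polynomial in the $\alpha_i$. This is essentially a theorem in its own right, and I would expect either to follow Matringe's route \cite{Mat13} of constructing the essential vector explicitly and computing its Whittaker function by induction on $n$ (the supercuspidal representations furnishing the base case, where $L(s,\pi) = 1$ and only the $\lambda = \emptyset$ term survives), or else the Jacquet--Piatetski-Shapiro--Shalika route \cite{JP-SS81, Jac12} of never computing $W^{\circ}$ at all: there one bootstraps $\Psi(s, W^{\circ}, W^{\prime\circ}) = L(s,\pi\times\pi')$ out of the general facts that $\Psi/L$ lies in $\C[q^{s},q^{-s}]$ and tends to $1$ as $\Re(s) \to \infty$, the local functional equation, the stability of the newform under the involution $\pi \mapsto \widetilde{\pi}$, and induction on $n$ --- and it is precisely the delicate points in making this second argument airtight that \cite{Jac12} was written to repair. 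Everything else --- the unfolding of the first step, the substitution of the two formulas, and the appeal to Cauchy's identity --- is routine bookkeeping together with one classical symmetric-function identity.
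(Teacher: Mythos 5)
The paper offers no proof of this theorem: it is imported wholesale from \cite{JP-SS81}, \cite{Jac12}, and \cite{Mat13}, so there is no internal argument to measure your proposal against. What you have written is, in substance, Matringe's route to the result. The preliminary reductions are all correct: $\diag(k',1)$ lies in $K_0(\pp^{c(\pi)})$ with lower-right entry $1$, so $g \mapsto W^{\circ}(\diag(g,1))$ is indeed right $\GL_{n-1}(\OO)$-invariant; the characters $\psi_{n-1}$ and $\overline{\psi}_{n-1}$ cancel so the Iwasawa unfolding collapses the integral to a sum over $a = \diag(\varpi^{k_1},\ldots,\varpi^{k_{n-1}})$ with constant $1$ under the paper's measure normalisations; the modulus-character bookkeeping $\delta_n^{1/2}(\diag(a,1))\,\delta_{n-1}^{-1/2}(a)\,|\det a|^{s-1/2} = |\det a|^{s}$ is right; and the identification of the Cauchy product $\prod_{i,j}(1-\alpha_i\alpha_j'q^{-s})^{-1}$ with $\prod_j L(s+t_j',\pi) = L(s,\pi\times\pi')$ is exactly the additivity of local factors from \cite[(9.5) Theorem]{JP-SS83} that the paper itself invokes in the proof of its main theorem. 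The single load-bearing ingredient is the one you flag: the Shintani-type formula $W^{\circ}(\diag(\varpi^{\lambda},1)) = \delta_n^{1/2}(\diag(\varpi^{\lambda},1))\,s_{\lambda}(\alpha_1,\ldots,\alpha_n)$ with zero-padded parameters, together with the support statement for ramified $\pi$. That formula is as deep as the theorem being proved, so as a self-contained argument your proposal is incomplete; but it is correctly attributed (it is the main theorem of \cite{Mat13}, established via derivatives and the local functional equation rather than by assuming the present statement, so there is no circularity), and granting it, the rest of your derivation is sound. In short: this is an accurate reconstruction of how the cited theorem is actually proved in the literature, not a new or self-contained proof, which is an entirely reasonable position given that the paper treats the statement as a black box.
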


Here the Haar measure on $\GL_n(F)$ is that induced from the Iwasawa decomposition $\GL_n(F) = \Ngp_n(F) \Agp_n(F) K$, namely $dg = du \, \delta_n^{-1}(a) \, d^{\times} a \, dk$, where $d^{\times}a = \prod_{i = 1}^{n} d^{\times} a_i$ with the multiplicative Haar measure on $F^{\times}$ given by $d^{\times} a_i = \zeta_F(1) |a_i|^{-1} \, da_i$.

\begin{theorem}[{Kim \cite[Theorem 2.1.1]{Kim10}}]
\label{Kimthm}
Let $\pi$ be an induced representation of Langlands type, and let $W^{\circ} \in \WW(\pi,\psi)$ denote the newform in the Whittaker model. Then for any spherical representation of Langlands type $\pi'$ of $\GL_n(F)$ with normalised spherical Whittaker function $W^{\prime\circ} \in \WW\left(\pi',\overline{\psi}\right)$, the $\GL_n \times \GL_n$ Rankin--Selberg integral
\begin{equation}
\label{GLnxGLneq}
\Psi\left(s,W^{\circ},W^{\prime\circ},\Phi^{\circ}\right) \defeq \int\limits_{\Ngp_n(F) \backslash \GL_n(F)} W^{\circ}(g) W^{\prime\circ}(g) \Phi(e_n g) \left|\det g\right|^s \, dg
\end{equation}
is equal to the Rankin--Selberg $L$-function $L(s,\pi \times \pi')$, where $e_n \defeq (0,\ldots,0,1) \in \Mat_{1 \times n}(F)$ and $\Phi^{\circ} \in \Ss(\Mat_{1 \times n}(F))$ is given by
\[\Phi^{\circ}(x_1,\ldots,x_n) \defeq \begin{dcases*}
\frac{\omega_{\pi}^{-1}(x_n)}{\vol(K_0(\pp^{c(\pi)}))} & if $c(\pi) > 0$, $x_1,\ldots,x_{n - 1} \in \pp^{c(\pi)}$, and $x_n \in \OO^{\times}$,	\\
1 & if $c(\pi) = 0$ and $x_1,\ldots,x_n \in \OO$,	\\
0 & otherwise.
\end{dcases*}\]
\end{theorem}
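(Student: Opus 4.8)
The plan is to unfold the $\GL_n(F) \times \GL_n(F)$ integral $\Psi(s,W^{\circ},W^{\prime\circ},\Phi^{\circ})$ along the mirabolic subgroup, exploiting the transformation law of the newform under $K_0(\pp^{c(\pi)})$ to absorb the Schwartz--Bruhat function $\Phi^{\circ}$ entirely, and then to evaluate the resulting $\GL_{n - 1}(F)$-integral by inserting the explicit formulas for the newform and the spherical Whittaker values on the diagonal torus. When $c(\pi) = 0$ the function $\Phi^{\circ}$ is the characteristic function of $\Mat_{1 \times n}(\OO)$ and the assertion is a classical unramified computation, carried out using the Casselman--Shalika formula \cite{CS80}; so assume from now on that $c(\pi) > 0$.

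Let $\Pgp'(F)$ be the mirabolic subgroup of $\GL_n(F)$ consisting of matrices with bottom row $e_n$, so that $\Ngp_n(F) \subseteq \Pgp'(F) \subseteq \GL_n(F)$, the map $g \mapsto e_n g$ identifies $\Pgp'(F) \backslash \GL_n(F)$ with $\Mat_{1 \times n}(F) \setminus \{0\}$, and $h \mapsto \diag(h,1)$ identifies $\Ngp_{n - 1}(F) \backslash \GL_{n - 1}(F)$ with $\Ngp_n(F) \backslash \Pgp'(F)$. The support of $\Phi^{\circ}$ is exactly the set of bottom rows $e_n k_0$ with $k_0 \in K_0(\pp^{c(\pi)})$, these being precisely the $(z_1,\ldots,z_n)$ with $z_1,\ldots,z_{n - 1} \in \pp^{c(\pi)}$ and $z_n \in \OO^{\times}$; hence the integrand of $\Psi(s,W^{\circ},W^{\prime\circ},\Phi^{\circ})$ is supported on the image of $\Pgp'(F) K_0(\pp^{c(\pi)})$ in $\Ngp_n(F) \backslash \GL_n(F)$, and every point there is represented by some $g = \diag(h,1) k_0$ with $h \in \Ngp_{n - 1}(F) \backslash \GL_{n - 1}(F)$ and $k_0 \in K_0(\pp^{c(\pi)})$. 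As $(k_0)_{n,n}$ is the last coordinate of $e_n k_0$ and $e_n \diag(h,1) = e_n$, the value $\Phi^{\circ}(e_n g) = \omega_{\pi}^{-1}((k_0)_{n,n}) / \vol(K_0(\pp^{c(\pi)}))$ cancels the factor $\omega_{\pi}((k_0)_{n,n})$ in $W^{\circ}(g) = \omega_{\pi}((k_0)_{n,n}) W^{\circ}(\diag(h,1))$; together with $W^{\prime\circ}(g) = W^{\prime\circ}(\diag(h,1))$ and $|\det g| = |\det h|$ this shows that the integrand is right $K_0(\pp^{c(\pi)})$-invariant, equal at the representative above to $W^{\circ}(\diag(h,1)) W^{\prime\circ}(\diag(h,1)) |\det h|^s / \vol(K_0(\pp^{c(\pi)}))$, and right $\GL_{n - 1}(\OO)$-invariant in $h$ (since $\diag(\GL_{n - 1}(\OO),1) \subseteq K_0(\pp^{c(\pi)})$ has trivial $(n,n)$-entry). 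Integrating out $k_0$ contributes a factor which, with the Iwasawa normalisation of Haar measures, cancels the $\vol(K_0(\pp^{c(\pi)}))^{-1}$ in $\Phi^{\circ}$, while the modulus character of $\Pgp'(F)$ enters the change of variables as an extra factor $|\det h|^{-1}$. One is left with
\[\Psi(s,W^{\circ},W^{\prime\circ},\Phi^{\circ}) = \int\limits_{\Ngp_{n - 1}(F) \backslash \GL_{n - 1}(F)} W^{\circ} \begin{pmatrix} h & 0 \\ 0 & 1 \end{pmatrix} W^{\prime\circ} \begin{pmatrix} h & 0 \\ 0 & 1 \end{pmatrix} \left|\det h\right|^{s - 1} \, dh.\]

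Write $\pi' = |\cdot|^{t_1} \boxplus \cdots \boxplus |\cdot|^{t_n}$ with Satake parameters $\alpha_j = q^{-t_j}$, and $L(s,\pi) = \prod_{i = 1}^{d} (1 - \beta_i q^{-s})^{-1}$; since $c(\pi) > 0$ one has $d \leq n - 1$. The integrand above is left $\Ngp_{n - 1}(F)$-invariant and right $\GL_{n - 1}(\OO)$-invariant, so, the newform Whittaker function being supported on dominant diagonal matrices, the integral becomes a sum over partitions $\mu = (\mu_1 \geq \cdots \geq \mu_{n - 1} \geq 0)$. For these the Casselman--Shalika formula \cite{CS80} gives $W^{\prime\circ}(\diag(\varpi^{\mu_1},\ldots,\varpi^{\mu_{n - 1}},1)) = \delta_n^{1/2}(\diag(\varpi^{\mu},1)) \, s_{\mu}(\alpha_1,\ldots,\alpha_n)$, with $s_{\mu}$ the Schur polynomial; and $W^{\circ}(\diag(\varpi^{\mu_1},\ldots,\varpi^{\mu_{n - 1}},1)) = \delta_n^{1/2}(\diag(\varpi^{\mu},1)) \, s_{\mu}(\beta_1,\ldots,\beta_d)$, which follows by applying Theorem \ref{JPPSWhittakerthm} with the spherical representation of $\GL_{n - 1}(F)$ allowed to vary, expanding $L(s,\pi \times \pi')$ by the Cauchy identity, and comparing coefficients of the (linearly independent) Schur polynomials in the Satake parameters of that representation. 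Substituting both, and using $\delta_n(\diag(\varpi^{\mu},1)) \, \delta_{n - 1}^{-1}(\diag(\varpi^{\mu})) = q^{-|\mu|}$ together with the homogeneity of $s_{\mu}$, the sum collapses to
\[\sum_{\mu} s_{\mu}(\beta_1,\ldots,\beta_d) \, s_{\mu}(\alpha_1 q^{-s},\ldots,\alpha_n q^{-s}),\]
taken over partitions with at most $n - 1$ parts; since $s_{\mu}(\beta_1,\ldots,\beta_d)$ vanishes unless $\mu$ has at most $d \leq n - 1$ parts, the Cauchy identity evaluates this to $\prod_{i = 1}^{d} \prod_{j = 1}^{n} (1 - \beta_i \alpha_j q^{-s})^{-1} = \prod_{j = 1}^{n} L(s,\pi \otimes |\cdot|^{t_j}) = L(s,\pi \times \pi')$.

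The weight of the argument lies in two places: the Haar-measure bookkeeping in the mirabolic unfolding, where one must verify that the overall constant is exactly $1$ --- this is the role of the normalisation $\vol(K_0(\pp^{c(\pi)}))^{-1}$ in $\Phi^{\circ}$ --- and that the shift of the exponent is exactly $s \mapsto s - 1$; and the identification of the newform Whittaker values on the torus via Theorem \ref{JPPSWhittakerthm}. Everything else is the Casselman--Shalika formula and two applications of the Cauchy identity. It is precisely because the newform's torus values involve only the $d \leq n - 1$ parameters $\beta_i$ --- rather than all $n$ Satake parameters, as for the spherical vector --- that the partition constraint imposed by $\GL_{n - 1}(F)$ is never active, so that no spurious Euler factor survives.
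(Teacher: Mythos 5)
The paper offers no proof of this statement—it is quoted directly from Kim's thesis \cite{Kim10}—so your argument is not being measured against one in the text; it supplies a proof, and it is essentially correct. Note that your reduction runs in the opposite direction to the way the theorem is used in Section 5 (there the propagation formula for $W^{\prime\circ}$ is inserted into \eqref{GLnxGLneq} to pull out the Godement--Jacquet integral; you instead collapse \eqref{GLnxGLneq} onto the torus). Your two main inputs check out. The support analysis is right: writing $g = uak$ via Iwasawa, $e_n g = a_n e_n k$ lies in the support of $\Phi^{\circ}$ only if $a_n \in \OO^{\times}$ and $k \in K_0(\pp^{c(\pi)})$ (here $c(\pi) > 0$ and the unimodularity of the bottom row of $k$ are both essential), and carrying out the Iwasawa integration confirms the exponent $s-1$ and an overall constant of exactly $1$—though you assert this bookkeeping rather than verify it, and it is where the normalisations $d^{\times}a_i = \zeta_F(1)|a_i|^{-1}\,da_i$ and $\vol(K_0(\pp^{c(\pi)}))^{-1}$ earn their keep. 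The formula $W^{\circ}(\diag(\varpi^{\mu},1)) = \delta_n^{1/2}(\diag(\varpi^{\mu},1))\, s_{\mu}(\beta_1,\ldots,\beta_d)$ is precisely the explicit newform formula of \cite{Mat13} (see also Miyauchi), and your derivation of it from Theorem \ref{JPPSWhittakerthm} by Cauchy expansion and comparison of Schur coefficients is legitimate: the identity holds for a Zariski-dense set of Satake parameters of the auxiliary $\GL_{n-1}$ representation, and both sides are power series in $q^{-s}$, so coefficients may be compared degree by degree.

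Two asserted steps deserve justification. First, $d = \deg L(s,\pi) \leq n-1$ when $c(\pi) > 0$: this follows because $L(s,\pi) = \prod_j L(s,\pi_j)$ with each $\pi_j$ essentially square-integrable on $\GL_{n_j}(F)$, and $\deg L(s,\pi_j) \leq 1$, with $\deg L(s,\pi_j) = n_j$ only when $\pi_j$ is an unramified character of $\GL_1(F)$—excluded for at least one $j$ since $\pi$ is ramified. Without this bound, partitions of length $n$ could survive the Cauchy identity and the final product would acquire spurious factors. Second, your closing identification $\prod_{i=1}^{d}\prod_{j=1}^{n}(1-\beta_i\alpha_j q^{-s})^{-1} = L(s,\pi\times\pi')$ silently uses the inductivity $L(s,\pi\times\pi') = \prod_{j=1}^{n} L(s+t_j',\pi)$ from \cite[(9.5) Theorem]{JP-SS83}, the same input the paper itself invokes in Section 5; this should be cited rather than treated as the definition.
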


\section{A Propagation Formula}

We now present a propagation formula for spherical Whittaker functions. This is a recursive formula for a $\GL_n(F)$ Whittaker function in terms of a $\GL_{n - 1}(F)$ Whittaker function.

\begin{lemma}
\label{propagationformulalemma}
Let $\pi' = |\cdot|^{t_1'} \boxplus \cdots \boxplus |\cdot|^{t_n'}$ be a spherical representation of Langlands type of $\GL_n(F)$. Then the normalised spherical Whittaker function $W^{\prime\circ} \in \WW(\pi',\overline{\psi})$ satisfies
\begin{multline}
\label{WviacheckW}
W^{\prime\circ}(g) = \left|\det g\right|^{t_1' + \frac{n - 1}{2}} \int_{\GL_{n - 1}(F)} W_0^{\prime\circ}(h) \left|\det h\right|^{-t_1' - \frac{n}{2}}	\\
\times \int\limits_{\Mat_{(n - 1) \times 1}(F)} \Phi'\left(h^{-1} \begin{pmatrix} 1_{n - 1} & v \end{pmatrix} g\right) \psi(e_{n - 1} v) \, dv \, dh,
\end{multline}
where $W_0^{\prime\circ} \in \WW(\pi_0',\overline{\psi})$ is the normalised spherical Whittaker function of the spherical representation of Langlands type $\pi_0' \defeq |\cdot|^{t_2'} \boxplus \cdots \boxplus |\cdot|^{t_n'}$ of $\GL_{n - 1}(F)$ and $\Phi' \in \Ss(\Mat_{(n - 1) \times n}(F))$ is the Schwartz--Bruhat function
\[\Phi'(x) \defeq \begin{dcases*}
1 & if $x \in \Mat_{(n - 1) \times n}(\OO)$,	\\
0 & otherwise.
\end{dcases*}\]
\end{lemma}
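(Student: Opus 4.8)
The plan is to derive the propagation formula by unfolding a $\GL_n \times \GL_n$ Rankin--Selberg integral whose Whittaker data are both spherical, exploiting the fact that for such pairs the integral is known in closed form, and then stripping off the integration over $\Ngp_n(F)\backslash\GL_n(F)$ by recognising the inner structure as a $\GL_{n-1}$-integral against a translated Schwartz--Bruhat function. More concretely, I would start from the Jacquet-integral definition of $W^{\prime\circ}$ on $\GL_n(F)$ and perform the Iwasawa decomposition on the Weyl-conjugated unipotent; after peeling off the top row one identifies the remaining $(n-1)\times(n-1)$ block as giving rise to the normalised spherical Whittaker function $W_0^{\prime\circ}$ of the principal series $\pi_0' = |\cdot|^{t_2'} \boxplus \cdots \boxplus |\cdot|^{t_n'}$, with the character $|\cdot|^{t_1'}$ on the first coordinate producing the prefactor $|\det g|^{t_1' + (n-1)/2}$ and the normalising power $|\det h|^{-t_1' - n/2}$ inside. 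The appearance of $\Phi'$, the characteristic function of $\Mat_{(n-1)\times n}(\OO)$, together with the $v$-integral over $\Mat_{(n-1)\times 1}(F)$ against $\psi(e_{n-1}v)$, encodes the Iwasawa/Bruhat sum over the position of the bottom-left corner of the big Weyl element relative to the parabolic $\Pgp_{(1,n-1)}$; the factor $\psi(e_{n-1}v)$ is exactly the contribution of the superdiagonal entry $u_{n-1,n}$ of $\psi_n$ that links the $\GL_{n-1}$-block to the final coordinate.

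The cleanest route is probably not to rederive everything from the Jacquet integral but rather to use the known evaluation: by Theorem~\ref{Kimthm} applied with $\pi$ itself taken to be a spherical representation of Langlands type, the integral $\Psi(s,W^{\prime\circ},W^{\prime\prime\circ},\Phi^{\circ})$ equals $L(s,\pi' \times \pi'')$; one then writes the right-hand side of \eqref{WviacheckW}, integrates it against an arbitrary $W^{\prime\prime\circ}$ and $|\det g|^s$ over $\Ngp_n(F)\backslash\GL_n(F)$, and checks that the resulting expression unfolds — via the substitution that merges the $\GL_{n-1}(F)$-integral in $h$, the $v$-integral, and the $\GL_n(F)$-integral in $g$ into a single $\GL_n(F)$-integral — to the $\GL_n \times \GL_{n-1}$ integral $\Psi(s,W^{\prime\prime\circ},W_0^{\prime\circ})$ of Theorem~\ref{JPPSWhittakerthm} (with roles suitably swapped), which also equals $L(s,\pi' \times \pi'')$ since $L(s,\pi'\times\pi'') = \prod_{i}L(s,\pi''\times|\cdot|^{t_i'})$ telescopes compatibly. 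Matching the two unfoldings for all spherical $\pi''$, and invoking the fact that a spherical Whittaker function is determined by its restriction to the diagonal together with density of the relevant test functions, forces the identity \eqref{WviacheckW} pointwise.

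The main obstacle I anticipate is bookkeeping the normalisations: tracking the half-integral shifts (the $\frac{n-1}{2}$ versus $\frac{n}{2}$ versus $\frac{1}{2}$ exponents), the precise power of $|\det g|$ pulled out front, the modulus character $\delta_n$ and its restriction to the Levi $\GL_1 \times \GL_{n-1}$, and the compatibility of the Haar measure on $\Ngp_n(F)\backslash\GL_n(F)$ with the product of the Haar measure on $\GL_{n-1}(F)$ and the additive measure on $\Mat_{(n-1)\times 1}(F)$ arising in the Iwasawa decomposition relative to $\Pgp_{(1,n-1)}$. A secondary subtlety is convergence: the $h$-integral against $|\det h|^{-t_1'-n/2}$ converges only in a suitable cone in the $t_i'$, so the identity should first be established for $\Re(t_1') > \cdots > \Re(t_n')$ in a region where both the Jacquet integral and this new integral converge absolutely, and then extended to all $(t_1',\ldots,t_n')$ by the holomorphy of the Jacquet integral in the spectral parameters; I would flag this analytic continuation explicitly rather than leave it implicit.
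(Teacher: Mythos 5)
The paper proves this lemma directly: it first establishes that the normalised spherical section $f^{\prime\circ}$ in the induced model of $\pi'$ admits the Godement-section expression \eqref{fviacheckf} (verified by checking the Borel transformation law \eqref{finducedeq}, right $K$-invariance \eqref{fKeq}, and the normalisation \eqref{fnormeq} explicitly), and then simply substitutes this into the Jacquet integral, decomposing $w_n$ and $u$ relative to the $(n-1,1)$ parabolic and changing variables $h \mapsto w_{n-1}u'h$. Your first sketch is in the neighbourhood of this but omits the Godement-section identity, which is the actual content of the argument: without it, ``peeling off the top row'' of the Jacquet integral has no clean intermediate object to land on. Your preferred ``cleanest route'' is genuinely different, and it has a real gap.

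The gap is in the final inversion step. You propose to show that both $\Psi(s,W^{\prime\circ},W^{\prime\prime\circ},\Phi^{\circ})$ and the integral obtained by replacing $W^{\prime\circ}$ with the right-hand side of \eqref{WviacheckW} equal $L(s,\pi'\times\pi'')$ for every spherical $\pi''$, and then to conclude that the two integrands agree pointwise by ``density of the relevant test functions.'' This is not a freestanding fact one can invoke: you would be asserting that the family of kernels $g\mapsto W^{\prime\prime\circ}(g)\,|\det g|^{s}\,\Phi^{\circ}(e_n g)$, as $\pi''$ ranges over spherical principal series and $s$ over a half-plane, separates points on $\Ngp_n(F)\backslash\GL_n(F)/K$. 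That is a spherical Whittaker--Plancherel or Mellin-inversion statement which requires proof, and proving it would likely be harder than the lemma itself. Moreover, even to set up the comparison you must first know that the right-hand side of \eqref{WviacheckW} is left $(\Ngp_n,\psi_n)$-equivariant, right $K$-invariant, and absolutely convergent in a suitable cone --- i.e.\ that it is a legitimate candidate for a spherical Whittaker function --- and verifying that is essentially what the paper's direct computation does. (Once you know the RHS lies in $\WW(\pi',\overline{\psi})$ and is right $K$-invariant, one-dimensionality of the spherical line already forces proportionality, and the $L$-function matching becomes unnecessary; you would only need to pin down the constant at $g=1_n$, which is the normalisation computation the paper performs.) Finally, note that the paper's proof of \hyperref[mainthm]{Theorem~\ref*{mainthm}} is precisely the unfolding you describe run forward, with Lemma \ref{propagationformulalemma} as an input; running it backward to prove the lemma risks circularity in spirit even if Kim's theorem is logically independent.

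So: your bookkeeping concerns (the $\tfrac{n-1}{2}$ vs.\ $\tfrac{n}{2}$ shifts, the modulus character, Haar-measure compatibility, convergence cone and analytic continuation in the $t_i'$) are well placed and the paper does attend to all of them, but they are not the obstruction. The missing idea is the Godement-section formula \eqref{fviacheckf} for the induced-model spherical vector, which converts the lemma into a short calculation; and the step you lean on instead --- inverting the Rankin--Selberg pairing against all spherical data to get a pointwise identity --- is not justified.
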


\begin{proof}
Let $f^{\prime\circ}$ be the normalised spherical vector in the induced model of $\pi'$, so that
\begin{align}
\label{fnormeq}
f^{\prime\circ}(1_n) & = \prod_{i = 1}^{n - 1} \prod_{j = i + 1}^{n} \zeta_F(1 + t_i' - t_j'),	\\
\label{finducedeq}
f^{\prime\circ}(uag) & = f^{\prime\circ}(g) \delta_n^{1/2}(a) \prod_{i = 1}^{n} |a_i|^{t_i'},	\\
\label{fKeq}
f^{\prime\circ}(gk) & = f^{\prime\circ}(g)
\end{align}
for all $u \in \Ngp_n(F)$, $a = \diag(a_1,\ldots,a_n) \in \Agp_n(F)$, $g \in \GL_n(F)$, and $k \in K$. We claim that $f^{\prime\circ}$ is also given by the Godement section
\begin{equation}
\label{fviacheckf}
f^{\prime\circ}(g) \defeq \left|\det g\right|^{t_1' + \frac{n - 1}{2}} \int_{\GL_{n - 1}(F)} \Phi'\left(h^{-1} \begin{pmatrix} 0 & 1_{n - 1} \end{pmatrix} g\right) f_0^{\prime\circ}(h) \left|\det h\right|^{-t_1' - \frac{n}{2}} \, dh.
\end{equation}
Here $f_0^{\prime\circ}$ is the normalised spherical vector in the induced model of $\pi_0'$, so that
\begin{align}
\label{f0normeq}
f_0^{\prime\circ}(1_{n - 1}) & = \prod_{i = 2}^{n - 1} \prod_{j = i + 1}^{n} \zeta_F\left(1 + t_i' - t_j'\right),	\\
\label{f0inducedeq}
f_0^{\prime\circ}(u'a'h) & = f_0^{\prime\circ}(h) \delta_{n - 1}^{1/2}(a') \prod_{i = 2}^{n} |a_i'|^{t_i'},	\\
\label{f0Keq}
f_0^{\prime\circ}(hk') & = f_0^{\prime\circ}(h)
\end{align}
for all $u' \in \Ngp_{n - 1}(F)$, $a' = \diag(a_2',\ldots,a_n') \in \Agp_{n - 1}(F)$, $h \in \GL_{n - 1}(F)$, and $k' \in \GL_{n - 1}(\OO)$. We then insert the identity \eqref{fviacheckf} into the Jacquet integral
\[W^{\prime\circ}(g) \defeq \int_{\Ngp_n(F)} f^{\prime\circ}(w_n ug) \psi_n(u) \, du,\]
write $w_n = \left(\begin{smallmatrix} 0 & 1 \\ w_{n - 1} & 0 \end{smallmatrix}\right)$ and $u = \left(\begin{smallmatrix} u' & 0 \\ 0 & 1 \end{smallmatrix}\right) \left(\begin{smallmatrix} 1_{n - 1} & v \\ 0 & 1 \end{smallmatrix}\right)$ for $u' \in \Ngp_{n - 1}(F)$ and $v \in \Mat_{(n - 1) \times 1}(F)$, and make the change of variables $h \mapsto w_{n - 1} u' h$ to obtain the identity \eqref{WviacheckW}.

So it remains to show that $f^{\prime\circ}$ is indeed given by \eqref{fviacheckf}. We first show that this is an element of the induced model of $\pi'$, just as in \cite[Proposition 7.1]{Jac09}. We replace $g$ with $\left(\begin{smallmatrix} 1 & v \\ 0 & u' \end{smallmatrix}\right) \left(\begin{smallmatrix} a_1 & 0 \\ 0 & a' \end{smallmatrix}\right) g$, where $v \in \Mat_{1 \times (n - 1)}(F)$, $u' \in \Ngp_{n - 1}(F)$, $a_1 \in F^{\times}$, and $a' \in \Agp_{n - 1}(F)$. Upon making the change of variables $h \mapsto u' a' h$ and using \eqref{f0inducedeq}, we see that \eqref{finducedeq} is satisfied. Next, we check that $f^{\prime\circ}$ given by \eqref{fviacheckf} satisfies \eqref{fKeq}, which follows easily from the fact that $\Phi'(xk) = \Phi'(x)$ for all $x \in \Mat_{(n - 1) \times n}(F)$ and $k \in K$. Finally, we confirm the normalisation \eqref{fnormeq}. To see this, we use the Iwasawa decomposition $h = u' a' k'$ in \eqref{fviacheckf}, in which case the Haar measure is $dh = \delta_{n - 1}^{-1}(a') \, du' \, d^{\times} a' \, dk'$. The integral over $\GL_{n - 1}(\OO) \ni k'$ is trivial. We then make the change of variables $u' \mapsto {u'}^{-1}$, $a' \mapsto {a'}^{-1}$, so that
\[f^{\prime\circ}(1_n) = f_0^{\prime\circ}(1_{n - 1}) \int_{\Ngp_{n - 1}(F)} \int_{\Agp_{n - 1}(F)} \Phi'\begin{pmatrix} 0 & a' u' \end{pmatrix} \prod_{i = 2}^{n} |a_i'|^{-t_i'} \delta_{n - 1}^{1/2}(a') \left|\det a'\right|^{t_1' + \frac{n}{2}} \, d^{\times} a' \, du',\]
recalling \eqref{f0inducedeq}. Writing $du' = \prod_{j = 2}^{n - 1} \prod_{\ell = j + 1}^{n} du_{j,\ell}'$ and $d^{\times} a' = \prod_{i = 2}^{n} d^{\times} a_i'$ and making the change of variables $u_{j,\ell}' \mapsto {a_j'}^{-1} u_{j,\ell}'$, this becomes
\[f_0^{\prime\circ}(1_{n - 1}) \prod_{j = 2}^{n - 1} \prod_{\ell = j + 1}^{n} \int_{\OO} \, du_{j,\ell}' \prod_{i = 2}^{n} \int_{\OO \setminus \{0\}} |a_i'|^{1 + t_1' - t_i'} \, d^{\times} a_i'.\]
The integral over $\OO \ni u_{j,\ell}'$ is $1$, while the integral over $\OO \setminus \{0\} \ni a_i'$ is $\zeta_F(1 + t_1' - t_i')$. Recalling the normalisation \eqref{f0normeq} of $f_0^{\prime\circ}(1_{n - 1})$, we see that \eqref{fnormeq} is indeed satisfied.
\end{proof}

\section{Proof of \texorpdfstring{\hyperref[mainthm]{Theorem \ref*{mainthm}}}{Theorem 1.2}}

\begin{proof}[Proof of {\hyperref[mainthm]{Theorem \ref*{mainthm}}}]
Let $\pi$ be a ramified induced representation of Langlands type of $\GL_n(F)$, so that $c(\pi) > 0$, and let $\pi' = |\cdot|^{t_1'} \boxplus \cdots \boxplus |\cdot|^{t_n'}$ be an arbitrary spherical representation of Langlands type of $\GL_n(F)$. We insert the identity \eqref{WviacheckW} for the normalised spherical Whittaker function $W^{\prime\circ} \in \WW(\pi',\overline{\psi})$ into the $\GL_n \times \GL_n$ Rankin--Selberg integral \eqref{GLnxGLneq}. Just as in \cite[Equation (8.1)]{Jac09}, we fold the integration over $\Ngp_{n - 1}(F) \backslash \Ngp_n(F) \cong \Mat_{(n - 1) \times 1}(F) \ni v$ and make the change of variables $g \mapsto \left(\begin{smallmatrix} h & 0 \\ 0 & 1 \end{smallmatrix}\right) g$. In this way, we find that $\Psi(s,W^{\circ},W^{\prime\circ},\Phi^{\circ})$ is equal to
\begin{equation}
\label{Psifoldeq}
\int\limits_{\Ngp_{n - 1}(F) \backslash \GL_{n - 1}(F)} W_0^{\prime\circ}(h) \left|\det h\right|^{s - \frac{1}{2}} \int_{\GL_n(F)} W^{\circ}\left(\begin{pmatrix} h & 0 \\ 0 & 1 \end{pmatrix} g\right) \Phi(g) \left|\det g\right|^{s + t_1' + \frac{n - 1}{2}} \, dg \, dh,
\end{equation}
with $\Phi(x) \defeq \Phi^{\circ}(e_n x) \Phi'\left(\begin{pmatrix} 1_{n - 1} & 0 \end{pmatrix} x\right)$ as in \eqref{Phidefeq}.

We claim that
\begin{equation}
\label{PhiKeq}
\Phi(g) = \int_K \xi^{c(\pi)}(k) \Phi(k^{-1} g) \, dk,
\end{equation}
with $\xi^{c(\pi)}$ as in \eqref{ximdefeq}. Indeed, $\xi^{c(\pi)}(k)$ vanishes unless $k \in K_0(\pp^{c(\pi)})$, in which case $\Phi(k^{-1} g)$ vanishes unless $g \in \Mat_{n \times n}(\OO)$ with $g_{n,1}, \ldots, g_{n,n - 1} \in \pp^{c(\pi)}$ and $g_{n,n} \in \OO^{\times}$. Then as $k^{-1} \in K_0(\pp^{c(\pi)})$, it is easily checked that
\[\omega_{\pi}^{-1}(e_n k^{-1} g \prescript{t}{}{e_n}) = \omega_{\pi}(k_{n,n}) \omega_{\pi}^{-1}(g_{n,n}),\]
using the fact that $e_n k^{-1} g \prescript{t}{}{e_n} - e_n k^{-1} \prescript{t}{}{e_n} g_{n,n} \in \pp^{c(\pi)}$, $e_n k^{-1} \prescript{t}{}{e_n} k_{n,n} - 1 \in \pp^{c(\pi)}$, and $c(\omega_{\pi}) \leq c(\pi)$. Thus \eqref{PhiKeq} follows.

We insert \eqref{PhiKeq} into \eqref{Psifoldeq} and make the change of variables $g \mapsto kg$, so that the integral over $K \ni k$ is
\[\int_K W^{\circ}\left(\begin{pmatrix} h & 0 \\ 0 & 1 \end{pmatrix} k g\right) \xi^{c(\pi)}(k) \, dk = \Lambda\left(\pi \begin{pmatrix} h & 0 \\ 0 & 1 \end{pmatrix} \cdot \int_K \xi^{c(\pi)}(k) \pi(k) \cdot (\pi(g) \cdot v^{\circ}) \, dk\right).\]
We note that
\[\int_K \xi^{c(\pi)}(k) \pi(k) \cdot (\pi(g) \cdot v^{\circ}) \, dk = \Pi^{c(\pi)}\left(\pi(g) \cdot v^{\circ}\right) = \beta(g) v^{\circ},\]
where $\beta(g) \defeq \langle \pi(g) \cdot v^{\circ}, \widetilde{v^{\circ}}\rangle$, recalling \eqref{Pimdefeq} and \eqref{Piprojeq}, so that
\begin{equation}
\label{intKtobetaeq}
\int_K W^{\circ}\left(\begin{pmatrix} h & 0 \\ 0 & 1 \end{pmatrix} k g\right) \xi^{c(\pi)}(k) \, dk = \beta(g) W^{\circ}\begin{pmatrix} h & 0 \\ 0 & 1 \end{pmatrix}.
\end{equation}

Combining \eqref{Psifoldeq} with \eqref{PhiKeq} and \eqref{intKtobetaeq}, we find that
\[\Psi(s,W^{\circ},W^{\prime\circ},\Phi^{\circ}) = Z(s + t_1',\beta,\Phi) \Psi(s,W^{\circ},W_0^{\prime\circ}),\]
recalling the definitions \eqref{GJinteq} of the Godement--Jacquet zeta integral and \eqref{GLnxGLn-1eq} of the $\GL_n \times \GL_{n - 1}$ Rankin--Selberg integral. From \hyperref[Kimthm]{Theorems \ref*{Kimthm}} and \ref{JPPSWhittakerthm},
\[\Psi(s,W^{\circ},W^{\prime\circ},\Phi^{\circ}) = L(s,\pi \times \pi'), \qquad \Psi(s,W^{\circ},W_0^{\prime\circ}) = L(s,\pi \times \pi_0').\]
Moreover, \cite[(9.5) Theorem]{JP-SS83} implies that
\[L(s,\pi \times \pi') = L\left(s,\pi \times |\cdot|^{t_1'}\right) L\left(s,\pi \times \pi_0'\right) = L\left(s + t_1',\pi\right) L\left(s,\pi \times \pi_0'\right).\]
Since $L(s,\pi \times \pi_0')$ is not uniformly zero, we conclude that
\[Z(s + t_1',\beta,\Phi) = L(s + t_1',\pi).\qedhere\]
\end{proof}

\phantomsection
\addcontentsline{toc}{section}{Acknowledgements}

\subsection*{Acknowledgements}

The author would like to thank the anonymous referee for many helpful suggestions and corrections.

\end{document}